\begin{document}

\markboth{Yen Hung Chen}
{The clustered selected-internal Steiner tree problem}
\input{epsf}
%
\catchline{}{}{}{}{}
%

\title{The clustered selected-internal Steiner tree problem \footnote{This work was supported in part by the Ministry of Science and Technology Contract grant
number: MOST 107-2115-M-845-001 and 109-2221-E-845-001.}}

\author{Yen Hung Chen\footnote{corresponding author.}}

\address{Department of Computer Science, University of Taipei, No.1, Ai-Guo West Road \\
Taipei 10048, Taiwan. \\
\email{yhchen@utaipei.edu.tw}}

\maketitle

\begin{history}
\received{(Day Month Year)}
\accepted{(Day Month Year)}
\comby{(xxxxxxxxxx)}
\end{history}

\begin{abstract}
Given a complete graph $G=(V,E)$, with nonnegative edge costs, two subsets $R \subset V$ and $R^{\prime} \subset R$, a partition $\mathcal{R}=\{R_1,R_2,\ldots,R_k\}$ of $R$, $R_i \cap R_j=\phi$, $i \neq j$  and $\mathcal{R}^{\prime}=\{R^{\prime}_1,R^{\prime}_2,\ldots,R^{\prime}_k\}$ of $R^{\prime}$, $R^{\prime}_i \subset R_i$, 
 a clustered Steiner tree is a tree $T$ of $G$ that spans all vertices in $R$ 
such that $T$ can be cut into $k$ subtrees $T_i$ by removing $k-1$ edges and each subtree $T_i$ spanning all vertices in $R_i$, $1 \leq i \leq k$. 
The  cost of a clustered Steiner tree is defined to be the sum of the costs of all its edges.
  A clustered selected-internal Steiner tree of $G$ is a clustered Steiner tree for $R$ if all vertices in $R^{\prime}_i$ are 
internal vertices of $T_i$, $1 \leq i \leq k$. The clustered selected-internal Steiner tree problem   is concerned with  the determination of a clustered selected-internal  Steiner tree $T$   for $R$ and  $R^{\prime}$ in $G$ with minimum cost.  
 In this paper, we present the first known approximation algorithm with performance ratio $(\rho+4)$  
for the clustered selected-internal Steiner tree problem,   
where $\rho$ is the best-known performance ratio for the Steiner tree problem.  
\end{abstract}

\keywords{ Design and analysis of algorithms; Approximation algorithms; facility allocation in networks; clustered Steiner tree ; selected-internal Steiner tree 
; clustered selected-internal Steiner tree}

\section{Introduction}

Given an undirected graph $G=(V,E)$, a subset $R \subseteq V$ of vertices, and  a nonnegative edge cost function, a {\em Steiner tree} is used to find  a tree in $G$ that spans all vertices in $R$. 
The given vertices $R$ are usually referred to as {\em terminals} and  other vertices $V \setminus R$ as {\em Steiner} vertices. 
The cost of a Steiner tree is defined to be the sum of the costs of all its edges. 
The Steiner tree problem is  concerned with  the determination of a Steiner tree   for $R$  in $G$ with minimum cost~\cite{cheng,du0,Du,hwang}. 
 The Steiner tree problem had been showed to be NP-Complete~\cite{npcstp} and MAX SNP-hard~\cite{Bern}. Hence, many 
approximation algorithms  had been designed for the Steiner tree problem~\cite{berman,borcher,Byrka,Hougardy1,Hougardy2,prom,Robins05,zel93,zel932}.  Moreover, 
The Steiner tree problem had many significant  applications in  network routing, VLSI design, and phylogenetic tree reconstruction~\cite{vlsi1,cheng,drummond,du0,Du,bio1,hwang,sai,winter}.

Motivated  by the applications of the facility allocation in the (sensor) network and  engineering change orders (ECO) in VLSI design,  
Hsieh and Yang~\cite{Hsieh2} proposed a variant  of the Steiner tree problem, called the {\em selected-internal  Steiner tree problem}.  
Given a complete undirected graph $G=(V,E)$, a nonnegetive cost function on edges, and two  subsets $R \subseteq V$ and  $R^{\prime} \subset  R$,  
a Steiner tree for $R$ in $G$ is a selected-internal  Steiner tree if all  terminals in $R^{\prime}$ are internal vertices of this Steiner tree.   
The {selected-internal  Steiner tree problem} (SISTP  for short) is concerned with  the determination of a selected-internal  Steiner tree   for $R$ and  $R^{\prime}$ in $G$ with minimum cost~\cite{Hsieh2,Li}.  
For the SISTP, 
without loss of generality, we assume  $|R \setminus R^{\prime}|\geq 2$ for the  SISTP, otherwise the solution of SISTP may not exist. 
Then Hsieh and Yang~\cite{Hsieh2}  
showed that the  SISTP is NP-complete and 
MAX SNP-hard. They also proposed a $2\rho$-approximation algorithm for the SISTP
 on {metric graphs}  
(i.e., a complete graph and the lengths of edges satisfy the triangle inequality),   
where $\rho$ is the best-known performance ratio for the Steiner tree problem  whose performance ratio is $\ln {4}+\epsilon\approx 1.39$~\cite{Byrka}.   
 Li et al.~\cite{Li}  
 improved the performance ratio to   $(\rho + 1)$ for the  SISTP.

Although the SISTP is defined by a group  of vertices (terminals),  
some  applications of computer and transportation network routing are considered in  more than one group  of vertices~\cite{bao,chisman,ding,gutt,lap,wu}. 
Chisman~\cite{chisman} 
 presented  a variant  of the traveling salesman problem~\cite{cormen,gol}, 
 called as the clustered traveling salesman problem. Wu and Lin~\cite{wu} 
 proposed another variant  of the Steiner tree problem, called as the clustered Steiner tree  problem.  Given a complete graph $G=(V,E)$, with  a nonnegetive cost function on edges, a subset $R \subset V$, and a partition $\mathcal{R}=\{R_1,R_2,\ldots,R_k\}$ of $R$, $R_i \cap R_j=\phi$, $i \neq j$, a clustered Steiner tree is a tree $T$ of $G$ that spans all vertices in $R$ 
such that $T$ can be cut into $k$ subtrees $T_i$ by removing $k-1$ edges and each subtree $T_i$ spanning all vertices in $R_i$, $1 \leq i \leq k$. In other word, all the vertices in the same cluster ($R_i$) are clustered together in $T$. 
Each subtree $T_i$ is called as a local tree of $T$. The  cost of a clustered Steiner tree is defined to be the sum of the costs of all its edges.  The local cost of a clustered Steiner tree is  the sum of the costs of all its edges in all its local trees.  Then the inter-cluster cost of a clustered Steiner tree is  the sum of
  the costs of remaining edges.  The clustered  Steiner tree problem (CSTP for short) 
is concerned with   the determination of  a clustered Steiner tree $T$ for $R$ in $G$  with minimum cost. 
 Wu and Lin~\cite{wu} 
 showed that the  CSTP is NP-complete and proposed a $(\rho+2)$-approximation algorithm for the CSTP on {metric graphs}. 
In this paper, we presented a variant of the SISTP and the CSTP,  called as the  clustered selected-internal  Steiner tree  problem.  
Given a complete graph $G=(V,E)$, with   a nonnegetive cost function on edges, two subsets $R \subset V$ and $R^{\prime} \subset R$, a partition $\mathcal{R}=\{R_1,R_2,\ldots,R_k\}$ of $R$, $R_i \cap R_j=\phi$, $i \neq j$,  
and $\mathcal{R}^{\prime}=\{R^{\prime}_1,R^{\prime}_2,\ldots,R^{\prime}_k\}$ of $R^{\prime}$, $R^{\prime}_i \subset R_i$,   a clustered selected-internal Steiner tree of $G$ is a clustered Steiner tree for $R$ if all vertices in $R^{\prime}_i$ are  
internal vertices of $T_i$, $1 \leq i \leq k$. 
The clustered selected-internal Steiner tree problem (CSISTP for short)  is concerned with  the determination of a clustered selected-internal  Steiner tree $T$   for $R$ and  $R^{\prime}$ in $G$ with minimum cost. It is not hard to see the CSISTP is NP-hard, since the SISTP is its special versions when $k=1$. 
  A Possible application of the CSISTP is to combine the applications of the SISTP and the CSTP in the following scenarios. 
Suppose there is a group of $|R|$ hosts (servers) in a computer network.  
A multicast tree is  about building a tree to  connect the group such that data  can be transmitted to the group.  
In some network  resource allocation strategies, some specified  
hosts (servers) in the group must act  as transmitters and the others 
need not have this restriction~\cite{Hsieh2}.  
 Hence,  transmitters are represented by the internal  vertices  of 
the multicast tree. The cost of an edge of the multicast tree represents 
the transmission  distance, building or routing costs between two hosts 
in the network.  Hence, 
a multicast tree in a  network  whose some specified servers in the group 
must be transmitters can be modeled by  
the SISTP~\cite{Hsieh2}. Then, for some communication networks, sometimes 
the edges are divided into two levels: inter-cluster or
intra-cluster, possibly with different costs, qualities, and capacities. 
After the multicast tree is constructed, the communications between hosts in the same cluster should be routed
locally rather than globally for the sake of capacity consideration or the simpleness
of routing protocols~\cite{wu}. If all  local topologies are given~\cite{wu}. The purpose is to design the inter-cluster topology, as well as the possible insertion of local Steiner vertices without violating their topologies. These reasons caused us to build a multicast tree which satisfies the definition of the CSTP and the  SISTP, simultaneously.  

In this paper,  we  design the first known approximation algorithm with  performance ratio of $(\rho+4)$ for the CSISTP on metric graphs. 
The rest of this paper is organized as follows. 
In Section 2, we describe our $(\rho+4)$-approximation algorithm to solve the CSISTP.  Finally, we  give the concluding remarks in Section 3. 

\section{A $(\rho+4)$-Approximation Algorithm for the CSISTP}

Formally, we list the definition of the CSISTP as follows.
\begin{definition} For   a complete graph $G=(V,E)$,  a subset $R \subset V$, a partition $\mathcal{R}=\{R_1,R_2,\ldots,R_k\}$ of $R$, $R_i \cap R_j=\phi$, $i \neq j$, a {clustered Steiner tree} is a tree $T$ of $G$ that spans all vertices in $R$ 
such that $T$ can be cut into $k$ subtrees $T_i$ by removing $k-1$ edges and each subtree $T_i$ spanning all vertices in $R_i$, $1 \leq i \leq k$. 
\end{definition}

\begin{description}
\item[\textbf{CSTP}] (Clustered Steiner Tree Problem)
\item[Instance:]  A complete graph $G=(V,E)$ with  a  cost function  
$c: E \rightarrow \mathbb{R}_{\geq 0}$ on the edges, a subset $R \subset V$, a partition $\mathcal{R}=\{R_1,R_2,\ldots,R_k\}$ of $R$, $R_i \cap R_j=\phi$, $i \neq j$. 
\item[Problem:]  Find a clustered Steiner tree $T$ for $R$ in $G$ such that the sum of the costs of all its edges in $T$  is minimized. 
\end{description}

\begin{definition}
 For   a complete graph $G=(V,E)$, two subsets $R \subset V$ and $R^{\prime} \subset R$, a partition $\mathcal{R}=\{R_1,R_2,\ldots,R_k\}$ of $R$, $R_i \cap R_j=\phi$, $i \neq j$,  and $\mathcal{R}^{\prime}=\{R^{\prime}_1,R^{\prime}_2,\ldots,R^{\prime}_k\}$ of $R^{\prime}$, each $R^{\prime}_i \subset R_i$.
 A {clustered selected-internal Steiner tree} of $G$ is a clustered Steiner tree for $R$ if all vertices in $R^{\prime}_i$ are internal vertices of $T_i$, $1 \leq i \leq k$. 
\end{definition}

\begin{description}
\item[\textbf{CSISTP}] (Clustered Selected-Internal  Steiner Tree Problem)
\item[Instance:]  A complete graph $G=(V,E)$ with  a  cost function  
$c: E \rightarrow \mathbb{R}_{\geq 0}$ on the edges, two subsets $R \subset V$ and $R^{\prime} \subset R$, a partition $\mathcal{R}=\{R_1,R_2,\ldots,R_k\}$ of $R$, $R_i \cap R_j=\phi$, $i \neq j$,  
and $\mathcal{R}^{\prime}=\{R^{\prime}_1,R^{\prime}_2,\ldots,R^{\prime}_k\}$ of $R^{\prime}$, each $R^{\prime}_i \subset R_i$.
\item[Problem:]  Find a clustered selected-internal  Steiner tree  $T$ for $R$ and  
$R^{\prime}$ in $G$ such that the sum of the costs of all its edges in $T$  is minimized.. 
\end{description}

For a subgraph $T$ of $G$, the cost of a tree $T=(V_T,E_T)$,  denoted by \textit{$c(T)$}, is the sum of the costs of all its edges in $T$, that is,
$c(T)=\sum_{e \in E_T} c(e)$. 
The following examples illustrate the CSTP, SISTP and  CSISTP, respectively.  Consider the instance shown in Fig.~\ref{Fig1}, in which the graph $G=(V,E)$, $R=\{A,B,C,D,E,F\}$, 
and $R^{\prime}=\{C,F\}$. An optimal solution $T_b$ of $G$ for the CSTP with $k=3$ when $R_1=\{A,B\}, R_2=\{C,D,E\}, R_3=\{F\}$ is shown in Fig.~\ref{Fig2}, in which   $c(T_b)=23$. An optimal solution $T_c$ of $G$ for the SISTP is shown in Fig.~\ref{Fig3}, in which  $c(T_c)=22$. 
An optimal solution $T_d$ of $G$ for the CSISTP with $k=2$ when $R_1=\{A,B,F\}, R_2=\{C,D,E\}$, $R_1^{\prime}=\{F\}, R_2^{\prime}=\{C\}$
 is shown in Fig.~\ref{Fig4}, in which   $c(T_d)=25$.

\begin{figure}
\centerline{\epsfbox{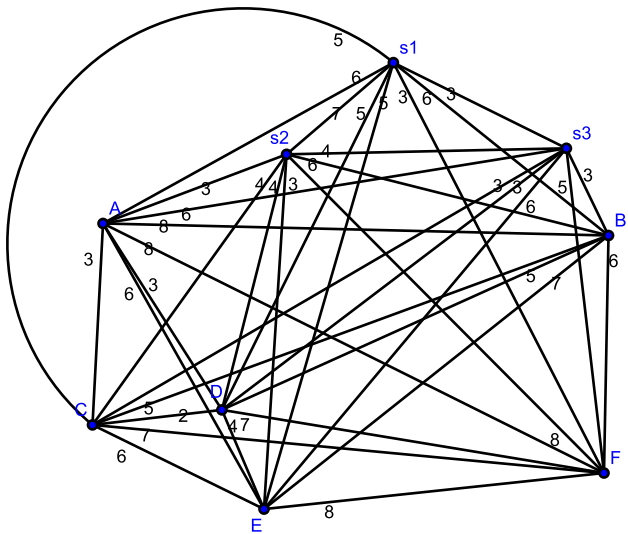}}
\caption{An instance: A complete graph $G=(V,E)$, $R=\{A,B,C,D,E,F\}$, $R^{\prime}=\{C,F\}$.}
\label{Fig1}
\end{figure}
\begin{figure}
\centerline{\epsfbox{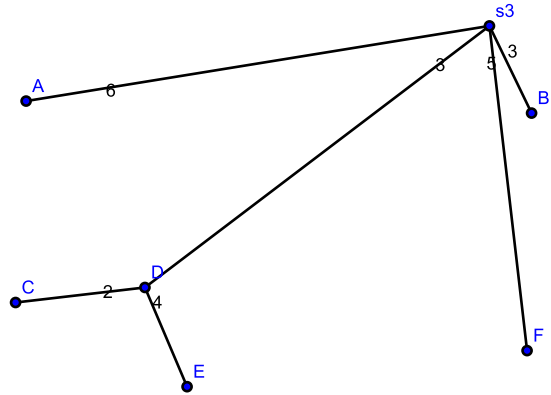}}
\caption{The optimal solution $T_b$ when $R_1=\{A,B\}, R_2=\{C,D,E\}, R_3=\{F\}$ for the CSTP.  (Note that $c(T_b)=23$).}
\label{Fig2}
\end{figure}
\begin{figure}
\centerline{\epsfbox{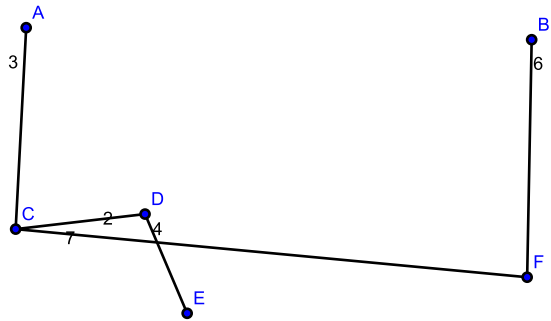}}
\caption{The optimal solution $T_c$ for the SISTP. (Note that $c(T_c)=22$).}
\label{Fig3}
\end{figure}

\begin{figure}
\centerline{\epsfbox{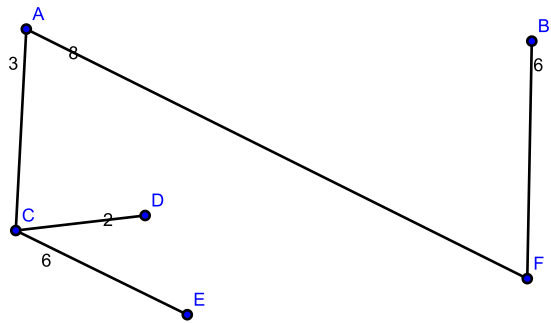}}
\caption{The optimal solution $T_d$ when $R_1=\{A,B,F\}, R_2=\{C,D,E\}$, $R_1^{\prime}=\{F\}, R_2^{\prime}=\{C\}$
for the CSISTP. (Note that $c(T_d)=25$).}
\label{Fig4}
\end{figure}

\begin{definition}
For a cluster Steiner tree $T$ spanning $R$, the local tree of $T_i$ on $T$ is the
minimal subtree of $T$ that spans all vertices in $R_i$, $1\le i\le k$. 
\end{definition}

In this section, we  present a $(\rho+4)$-approximation algorithm   for  
the CSISTP, whose cost function is metric. For a complete graph $G=(V,E)$, let $c(u,v)$ 
denote the cost of a edge $(u,v)$, for the two vertices $u,v \in V$.  
For a vertex subset $S$, let $G[S]$  denote the subgraph of $G$ induced by $S$. 
A  minimum spanning tree (MST for short)~\cite{cormen,prime} of $G[S]$ is  concerned with  the determination of a tree spanning  
  all verteices in $S$ with minimum cost of $G[S]$. We let \textit{$A_{mst}(G[S])$} be the algorithm to find a minimum spanning tree of $G[S]$. 
For a graph $H=(V_H,E_H)$, the {\em contraction} of an edge $(u, v)$ is to replace the two vertices $u$ and $v$ with a new vertex $w$, and then 
the edge cost  is assigned to $c(w,x) = \min{\{c(u, x), c(v, x)\}}$ for any other vertex $x$. For a subgraph $S=(V_S,E_S)$ in $H$, the contraction of
 $S$ in $H$ means to contract all the edges of $E_S$ in an arbitrary order, and the resulting graph is denoted by $H/S$. For convenience, 
we also use $H/S$  to $H/H[S]$ when $S$ is a vertex subset. Finally, we let $G/R$ denote the graph resulted from contracting every $R_i \in R$, $1 \le i \le k$, i.e., each cluster $R_i$ is concentrated into a vertex~\cite{wu}.

\begin{definition} For a  graph $G=(V,E)$, a  {Hamiltonian path} of $V$ is a path that visits each vertex in $V$ exactly once. 
\end{definition}
\begin{definition} A graph $G=(V,E)$ is Hamiltonian-connected if for every pair of vertices $u$ and $v$, the two vertices can be 
 connected by a Hamiltonian path from $u$ to $v$.
\end{definition}

For any clustered Steiner tree $T$ and each its local tree $T_i$  of $R_i$, Wu and Lin~\cite{wu} 
showed $T$ can be transformed  into a clustered Steiner tree $\widehat{T}$ such that each local tree $\widehat{T}_i$ of $\widehat{T}$ is a Hamiltonian Path of $R_i$. By
Wu and Lin's algorithm~\cite{wu},  for any clustered Steiner tree $T$, if the vertex $r$ in $R_i$ is an internal vertex in $T_i$,  the vertex $r$ is also an internal vertex in $\widehat{T}_i$. Hence, Wu and Lin's  algorithm~\cite{wu} also satisfies any clustered selected-internal  Steiner tree for $R$ and  $R^{\prime}$ in $G$, i.e., each vertex in $R^{\prime}_i$ is also an internal vertex in  
$\widehat{T}_i$. 

\begin{definition}~\cite{wu}
For a clustered Steiner tree $T$ for $R$, the inter-cluster tree of $T$ is that the contraction of  all its local trees becomes in a tree, denoted by $T/R$.
\end{definition}

For any clustered Steiner tree $T$ for $R$ and its local tree $T_i$, $1\le i \le k$, the next three lemmas come from~\cite{wu}. 

\begin{lemma}\label{ltree}
Each local tree $T_i$ is replaced with a Hamiltonian path $\widehat{T}_i$ of $R_i$ and 
each vertex in $R^{\prime}_i$ is  a internal vertex of $\widehat{T}_i$,  $1 \le i \le k$.
 We have $c(\widehat{T}_i) \le 2 c(T_i)$.
\end{lemma}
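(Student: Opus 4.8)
The plan is to use the classical tree-doubling-and-shortcutting argument, but to control the two endpoints of the resulting path so that every vertex of $R'_i$ remains in the interior. First I would record a structural fact about the local tree. Since $T_i$ is taken to be the minimal subtree of $T$ spanning $R_i$, it has no Steiner leaves: every leaf of $T_i$ already lies in $R_i$. On the other hand, by the definition of a clustered selected-internal Steiner tree, every vertex of $R'_i$ is an internal vertex of $T_i$. Consequently every leaf of $T_i$ belongs to $R_i \setminus R'_i$, and (under the feasibility condition $|R_i \setminus R'_i| \ge 2$, which holds whenever $|R_i|\ge 2$ since a tree on at least two vertices has at least two leaves) I may fix two distinct leaves $u,v$ of $T_i$ to serve as the prescribed endpoints of $\widehat{T}_i$.

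Next I would build an Eulerian structure whose cost is bounded by $2c(T_i)$ and whose only two odd-degree vertices are $u$ and $v$. Let $P_{uv}$ denote the unique $u$--$v$ path in $T_i$. Form a multigraph $M$ from $T_i$ by doubling every edge that does not lie on $P_{uv}$ and keeping every edge of $P_{uv}$ single. A short parity check shows that $u$ and $v$ have odd degree while every other vertex has even degree: an interior vertex of $P_{uv}$ contributes two single path-edges plus doubled off-path edges; a vertex off $P_{uv}$ contributes only doubled edges; and, since $u$ and $v$ are leaves, each contributes a single path-edge and nothing else, giving degree $1$. Since $M$ is connected (no edge of $T_i$ was deleted), it admits an Eulerian path $W$ from $u$ to $v$, of cost $c(W)=2c(T_i)-c(P_{uv}) \le 2c(T_i)$.

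Finally I would shortcut $W$ to a Hamiltonian path on $R_i$. Traversing $W$ from $u$ to $v$ and retaining only the first occurrence of each vertex of $R_i$ (deleting repeated visits and all Steiner vertices), the triangle inequality guarantees that replacing each skipped subwalk by a direct edge does not increase the cost, so the resulting path $\widehat{T}_i$ satisfies $c(\widehat{T}_i)\le c(W)\le 2c(T_i)$. Because $u$ and $v$ have degree $1$ in $M$, they occur exactly once in $W$, namely as its two endpoints; they therefore remain the endpoints of $\widehat{T}_i$, so every other vertex of $R_i$ — in particular every vertex of $R'_i$ — becomes an internal vertex of $\widehat{T}_i$, as required.

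The step I expect to require the most care is the endpoint control: the generic doubling argument only yields a Hamiltonian cycle, or a path with uncontrolled endpoints, and a careless choice could place a vertex of $R'_i$ at an end of the path and thereby violate the selected-internal requirement. The device of leaving the edges of $P_{uv}$ undoubled is exactly what pins the two odd-degree vertices to the chosen leaves while simultaneously keeping the total cost at or below $2c(T_i)$; once the parity of $M$ and its connectivity are verified, the cost bound and the internality of the vertices of $R'_i$ follow routinely.
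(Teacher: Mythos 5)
Your proof is correct, but it takes a genuinely different route from the one the paper relies on. The paper does not prove Lemma~\ref{ltree} from scratch: it imports it from Wu and Lin's transformation~\cite{wu}, and the in-paper machinery for this kind of bound (Lemma~\ref{hamratio} and the Appendix) is the Sekanina--Karaganis theorem~\cite{seka,kara} that the cube of a tree is Hamiltonian-connected, so that $A_h(T_i,u,v)$ produces a Hamiltonian path between \emph{any} prescribed pair of vertices, with the factor-$2$ cost bound again obtained by doubling the tree edges and shortcutting. You instead give a self-contained, Christofides-style parity argument: double only the edges off the $u$--$v$ path $P_{uv}$, verify that $u$ and $v$ are the unique odd-degree vertices of the resulting multigraph, extract an Euler trail, and shortcut by first occurrences. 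Your parity check, cost accounting ($2c(T_i)-c(P_{uv})\le 2c(T_i)$, in fact marginally sharper than needed), and endpoint control are all sound, and your preliminary observation is a nice bonus: since $T_i$ is the \emph{minimal} subtree spanning $R_i$, all its leaves lie in $R_i$, and internality of $R'_i$ in $T_i$ pushes them out of $R'_i$, which re-derives the feasibility condition $|R_i\setminus R'_i|\ge 2$ for any feasible solution rather than assuming it. What the paper's cube route buys, and why Algorithm APX needs it, is endpoint freedom: there the endpoints are \emph{arbitrary} vertices of $R_i\setminus R'_i$ in an MST of $G[R_i]$, not necessarily leaves, whereas your argument pins $u,v$ to leaves --- rightly so, since if $v$ were not a leaf it could be visited early in the Euler trail and first-occurrence shortcutting would strand it in the interior of the path (one would have to keep its \emph{last} occurrence instead). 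For the lemma as stated, leaf endpoints suffice, so your proof is complete; only the degenerate case $|R_i|=1$ (where $R'_i=\emptyset$ and the path is a single vertex) deserves a one-line mention.
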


\begin{lemma}\label{intercluster}
For each inter-cluster tree $\widehat{T}/R$ of $\widehat{T}$, we have  $c(\widehat{T}/R) \le c(T)$.
\end{lemma}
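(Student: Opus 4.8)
The plan is to bound $c(\widehat{T}/R)$ from above by the inter-cluster cost of $\widehat{T}$, and then to identify that inter-cluster cost with the inter-cluster cost of $T$, which is trivially at most $c(T)$. First I would write the cost of the clustered Steiner tree $\widehat{T}$ as the sum of its local cost and its inter-cluster cost, namely $c(\widehat{T}) = \sum_{i=1}^{k} c(\widehat{T}_i) + c(I)$, where $I$ denotes the set of the $k-1$ inter-cluster edges that are removed to split $\widehat{T}$ into its local trees $\widehat{T}_1,\ldots,\widehat{T}_k$. By definition the inter-cluster tree $\widehat{T}/R$ is obtained from these $k-1$ edges after contracting each local tree $\widehat{T}_i$ to a single vertex. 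Since the contraction rule assigns to every edge incident to a contracted vertex the \emph{minimum} of the costs of the corresponding original edges, contraction can never increase an edge cost; hence $c(\widehat{T}/R) \le c(I)$, i.e. the cost of the inter-cluster tree is at most the inter-cluster cost of $\widehat{T}$.

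Next I would argue that the inter-cluster cost of $\widehat{T}$ coincides with the inter-cluster cost of $T$. The transformation of Lemma~\ref{ltree}, coming from the reconnection procedure of~\cite{wu}, rewrites each local tree $T_i$ into a Hamiltonian path $\widehat{T}_i$ of $R_i$ while leaving the clustering and the global attachment of the clusters untouched; consequently the set of inter-cluster edges $I$ is common to $T$ and $\widehat{T}$, so $c(I)$ is the very same quantity in both decompositions. Finally, because $I$ is a subset of the edges of $T$ and all edge costs are nonnegative, we have $c(I) \le c(T)$. Chaining the three inequalities gives $c(\widehat{T}/R) \le c(I) \le c(T)$, which is exactly the claim.

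The step I expect to be the main obstacle is justifying that the passage from $T$ to $\widehat{T}$ genuinely preserves the inter-cluster edges, so that the set $I$ may be treated as common to both trees. The delicate point is that a local tree $T_i$ may contain Steiner vertices, and an inter-cluster edge of $T$ could attach to such a Steiner vertex; after $T_i$ is replaced by a Hamiltonian path on the terminals $R_i$ alone, that attachment point disappears and the inter-cluster edge must be re-routed to a terminal of $R_i$. Handling this cleanly requires appealing to the precise construction of~\cite{wu} rather than to the cost bound of Lemma~\ref{ltree} in isolation; the key is that in the contracted graph every inter-cluster edge is re-routed to the cheapest available endpoint inside each cluster, so the minimum-cost rule of edge contraction still forces $c(\widehat{T}/R)\le c(I)$. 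Once the invariance of the inter-cluster structure is secured, the remaining two inequalities are immediate, and the lemma follows.
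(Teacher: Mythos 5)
Your first and third inequalities are fine, but the middle identification is a genuine gap---and it is exactly the point you flagged yourself. The set of inter-cluster edges is \emph{not} common to $T$ and $\widehat{T}$: whenever an inter-cluster edge of $T$ attaches to a Steiner vertex of a local tree $T_i$, the replacement of $T_i$ by a Hamiltonian path on the terminals $R_i$ destroys that attachment point, and your proposed patch fails because in $G/R$ only the \emph{terminals} of $R_i$ are merged into $r_i$; the Steiner attachment vertex is not, so the ``cheapest available endpoint inside the cluster'' is the cheapest terminal, which can be far costlier than the original edge. Concretely, put $a=-1$, $s=0$, $b=1$, $c=\epsilon$ on the real line, with $R_1=\{a,b\}$, $R_2=\{c\}$, $R'=\emptyset$, and let $T$ have local tree $a$--$s$--$b$ (Steiner vertex $s$) and the single inter-cluster edge $(s,c)$. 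The inter-cluster cost of $T$ is $\epsilon$, but any $\widehat{T}$ whose local tree for $R_1$ is the path $a$--$b$ must use an inter-cluster edge of cost at least $\min\{c(a,c),c(b,c)\}=1-\epsilon$. Thus your chain $c(\widehat{T}/R)\le c(I)\le c(T)$ passes through the intermediate claim ``$c(\widehat{T}/R)\le$ inter-cluster cost of $T$,'' i.e. $c(\widehat{T}/R)\le c(T)-\sum_i c(T_i)$, which is simply false; the lemma survives in the example only because the full cost $c(T)=2+\epsilon$ absorbs the loss. Any proof that discards the local-tree edges at the outset, as yours does, cannot be repaired.

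For comparison: the paper gives no proof at all---it imports Lemmas~\ref{ltree}--\ref{cstp} from Wu and Lin~\cite{wu}---and the correct argument there charges the re-routing of Steiner attachments against the local trees, which is precisely why the right-hand side of Lemma~\ref{intercluster} is the full $c(T)$ rather than its inter-cluster part. A clean way to see it: identify the terminal set $R_i$ to the single vertex $r_i$ in $T$ itself, for every $i$. Under the min-cost contraction rule no edge cost increases, so the image of $T$ is a connected subgraph of $G/R$ spanning $\widetilde{R}=\{r_1,\dots,r_k\}$ (local edges between two terminals of the same cluster collapse to self-loops and are discarded, while local edges from a terminal to a Steiner vertex survive and are exactly what pays for bridging from $r_i$ to a Steiner attachment point). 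Extracting a tree from this image gives an inter-cluster connector of cost at most $c(T)$, and by triangle inequality (e.g. $c(u',v')\le c(u',u)+c(u,v)+c(v,v')$ with the detours routed inside the $T_i$) this yields the bound for the $\widehat{T}$ of Lemma~\ref{ltree}. In short: the budget $\sum_i c(T_i)$ is an essential part of the right-hand side, not slack, and your proof omits it.
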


Since each clustered Steiner tree $T$  satisfies the Lemma~\ref{ltree} and Lemma~\ref{intercluster}, we have  next Lemma. 
\begin{lemma}\label{cstp}
Let $T_{opt}$ be the optimal solution for the CSTP. There exists a clustered Steiner tree $\widehat{T}$ such that each local tree $\widehat{T}_i$ of $\widehat{T}$ 
 is a Hamiltonian Path of $R_i$ and each vertex in $R^{\prime}_i$ is  a internal vertex of $\widehat{T}_i$ that  
$c(\widehat{T}/R) \le c(T_{opt})$.
\end{lemma}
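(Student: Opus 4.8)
The plan is to build $\widehat{T}$ directly from the optimal clustered Steiner tree $T_{opt}$ by applying, cluster by cluster, the local-tree transformation already packaged in Lemmas~\ref{ltree} and~\ref{intercluster}, and then to read the cost bound straight off those lemmas. First I would fix $T_{opt}$ together with its decomposition: deleting the appropriate $k-1$ edges splits $T_{opt}$ into its $k$ local trees, which I denote $(T_{opt})_1,\ldots,(T_{opt})_k$, with $(T_{opt})_i$ spanning $R_i$, while the deleted $k-1$ edges form the inter-cluster part of $T_{opt}$.

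Next, for each $i$ I would invoke Lemma~\ref{ltree} to replace $(T_{opt})_i$ by a Hamiltonian path $\widehat{T}_i$ of $R_i$ in which every vertex of $R^{\prime}_i$ is internal. The key observation is that this replacement is purely intra-cluster: it rewires only the edges inside $R_i$ and leaves the $k-1$ inter-cluster edges of $T_{opt}$ untouched. Reattaching those inter-cluster edges to the new paths therefore yields a graph $\widehat{T}$ that is still a tree spanning $R$, still separates into exactly the $k$ local trees $\widehat{T}_1,\ldots,\widehat{T}_k$ after removing the same $k-1$ edges, and hence is again a clustered Steiner tree for $R$; since each $\widehat{T}_i$ is a Hamiltonian path with every vertex of $R^{\prime}_i$ internal, $\widehat{T}$ is in fact a valid clustered selected-internal Steiner tree for $R$ and $R^{\prime}$.

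Finally, because $\widehat{T}$ is produced from $T_{opt}$ by precisely the transformation covered by Lemma~\ref{intercluster}, I would instantiate that lemma with $T = T_{opt}$ to obtain $c(\widehat{T}/R) \le c(T_{opt})$, which is exactly the claimed inequality. I expect no real analytic difficulty here: the entire content is the bookkeeping in the middle step, namely checking that the path-replacement preserves the internal-vertex status of every $r \in R^{\prime}_i$ and does not disturb the inter-cluster edges. Both facts are already guaranteed --- the internal-vertex preservation is built into Lemma~\ref{ltree} and was remarked on in the discussion preceding it, and the inter-cluster edges survive because the rerouting is confined to individual clusters --- so the main (and only) obstacle is to state these feasibility checks carefully, after which the cost bound follows immediately from Lemma~\ref{intercluster}.
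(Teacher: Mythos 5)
Your proposal follows the paper's own route: the paper proves Lemma~\ref{cstp} in a single line, by observing that every clustered Steiner tree admits the Wu--Lin transformation of Lemmas~\ref{ltree} and~\ref{intercluster} and instantiating it at $T = T_{opt}$, exactly as you do. One caveat on your middle step, however: your ``key observation'' that replacing each local tree by a Hamiltonian path ``leaves the $k-1$ inter-cluster edges of $T_{opt}$ untouched'' is not true in general. A local tree $(T_{opt})_i$ may contain Steiner vertices of $V \setminus R$, and an inter-cluster edge of $T_{opt}$ may be incident to such a Steiner vertex; since the Hamiltonian path $\widehat{T}_i$ contains only the vertices of $R_i$, that edge cannot be reattached at the same endpoint and must be rerouted to a terminal of $R_i$, which changes its cost. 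This is precisely why Lemma~\ref{intercluster} bounds $c(\widehat{T}/R)$ by the \emph{total} cost $c(T)$ rather than by the inter-cluster cost of $T$ alone: the triangle-inequality detour through the old local tree is what gets absorbed into $c(T)$. Your conclusion survives because you ultimately draw both feasibility (including preservation of the internal status of each vertex of $R^{\prime}_i$, which is built into Lemma~\ref{ltree} and the discussion preceding it) and the cost bound from Lemmas~\ref{ltree} and~\ref{intercluster} as black boxes, just as the paper does; but the ``untouched edges'' justification, taken literally, would instead yield the stronger and generally false claim that $c(\widehat{T}/R)$ is at most the inter-cluster cost of $T_{opt}$, so you should strike that sentence and let the cited lemmas carry the feasibility and cost statements.
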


Given a connected graph $G=(V,E)$, the \textit{cube} of $G$, denoted by $G^3=(V,E_{G^3})$, is the graph with the same vertex set as $G$ and any  
edge $(u,v)$ in $E_{G^3}$  if and only if 
there exists a path  between the two vertices $u$ and $v$ in $G$ and the number of edges in the path is at most three. 
 Independently, Sekanina~\cite{seka}  
 and Karaganis~\cite{kara}  
 proved that the cube of every connected graph with at least three vertices is 
{Hamiltonian-connected}. 
 We let \textit{$A_{h}(G,u,v)$} be the 
algorithm  for finding a Hamiltonian path between the two vertices $u$ and $v$ in the $G^3$ by Karaganis' proof~\cite{kara}, whose time-complexity is 
$O(|V|^2)$. See Appendixm or Chen~\cite{chen} 
for more details of this algorithm.   
For a tree $T=(V_T, E_T)$ and $(u,w),(w,v) \in E_T$, we define the {\it shortcut} between $u$ and $v$ is to replace edges $(u,w)$ and $(w,v)$ with $(u,v)$.

\begin{lemma} \label{hamratio}
For every tree $T=(V_T,E_T)$ of $G$, if $T^h=(V_{T}, E_{T^h})$ is the output of Hamiltonian-path of $V_T$  by \textit{$A_{h}(T,u,v)$}, 
 we have $c(T^h) \le 2 c(T)$ by the  triangle inequality with doubling the tree edges $E_T$ and then traversal shortcuts between the adjacent vertices in $T^h$.
\end{lemma}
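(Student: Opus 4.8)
The plan is to establish the bound by the classical \emph{double-and-shortcut} argument, applying the triangle inequality exactly where the lemma statement points. First I would form the multigraph $T'$ obtained from $T$ by doubling every edge of $E_T$. Every vertex of $T'$ then has even degree, so $T'$ is Eulerian, and its total cost is $c(T')=2c(T)$. I would next fix an Euler tour $W$ of $T'$, i.e.\ a closed walk that uses each edge of $T'$ exactly once; since $W$ uses each doubled edge once, its cost equals $2c(T)$, and $W$ visits every vertex of $V_T$ at least once.

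Second, I would convert $W$ into a Hamiltonian structure on $V_T$ by shortcutting. Traversing $W$ from a chosen starting vertex, I record each vertex at its first appearance and skip every later repetition; whenever the walk moves from an already-recorded vertex $x$ through one or more already-recorded intermediate vertices to the next new vertex $y$, I replace that sub-walk by the single edge $(x,y)$. Because $G$ is metric, each such replacement satisfies $c(x,y)\le$ (cost of the replaced sub-walk) by repeated use of the triangle inequality, so the total cost does not increase. The outcome is a Hamiltonian cycle on $V_T$ of cost at most $c(W)=2c(T)$; deleting one edge yields a Hamiltonian path of cost at most $2c(T)$.

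The remaining and most delicate step is to identify this shortcut path with the output $T^h$ of $A_{h}(T,u,v)$, which is forced to carry the prescribed endpoints $u$ and $v$. Here I would invoke the structure of Karaganis' construction in the cube $T^3$: every edge $(x,y)\in E_{T^h}$ joins two vertices whose distance in $T$ is at most three, so $c(x,y)$ is at most the cost of the corresponding tree path, which has at most three edges. The crux is to verify that the $|V_T|-1$ tree paths associated with the edges of $T^h$ together traverse each edge of $E_T$ at most twice; once this covering property is checked, summing the bound $c(x,y)\le$ (cost of the associated tree path) over all edges of $T^h$ yields $c(T^h)\le 2c(T)$, matching the doubled-tree estimate and completing the proof. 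I expect this covering-and-endpoint verification to be the main obstacle, since it is precisely the point where the specific endpoint-constrained path must be reconciled with the generic shortcut bound; the appendix construction of $A_{h}$ is what makes it go through.
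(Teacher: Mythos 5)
Your route is the same one the paper takes---Lemma~\ref{hamratio} is justified in the paper (and again in the appendix) by precisely the doubling-and-shortcut assertion you open with, with no further detail---but your proposal, read as a proof, stops one step short of being complete. Your first two paragraphs are correct yet, as you yourself observe, they bound only \emph{some} Hamiltonian path obtained by shortcutting an Euler tour of the doubled tree: deleting an edge of the resulting Hamiltonian cycle gives no control over the endpoints, so nothing there applies to the endpoint-constrained output of $A_h(T,u,v)$. The entire content of the lemma therefore rests on your third paragraph, and there the decisive claim---that the tree paths associated with the edges of $T^h$ cover each edge of $E_T$ at most twice---is left as an unverified expectation. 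That is a genuine gap: each edge of $T^h$ spans a tree path of at most three edges, but a priori a single tree edge could lie on many of the $|V_T|-1$ associated paths, and no bound of the form $2c(T)$ follows without the covering property.

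The gap does close, and directly from the appendix pseudocode, by induction on the recursion of $A_h$. The only edge of $T^h$ not produced by a recursive subcall is the connecting edge $(v_{a'},v_{b'})$ of step 4. In the non-adjacent case its tree path is $(v_{a'},v_a,v_1)$, spanning the cut edge $(v_a,v_1)$ once and the endpoint-adjacent edge $(v_a,v_{a'})$ once; in the adjacent case it is $(v_{a'},v_a,v_b,v_{b'})$, spanning the cut edge $(v_a,v_b)$ once and the endpoint-adjacent edges $(v_a,v_{a'})$ and $(v_b,v_{b'})$ once each. A cut edge is deleted from both subtrees immediately after being spanned, so it is never spanned again; an endpoint-adjacent edge $(v_a,v_{a'})$ survives, but its two endpoints become exactly the endpoints of the recursive subcall on its subtree, so by step 1.5 it is the very next cut edge there and is spanned exactly once more before deletion. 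Since every edge of $T$ is cut exactly once over the whole recursion, and can have served as an endpoint-adjacent edge at most once beforehand (the subcalls act on disjoint subtrees, so steps 1.3, 1.6 and 1.7 cannot select the same edge twice), every edge of $E_T$ is spanned at most twice; summing the triangle inequality over the edges of $T^h$ yields $c(T^h)\le 2c(T)$, including the prescribed endpoints $u,v\in R_i\setminus R_i'$ that the algorithm needs. With that induction supplied, your argument becomes a complete---and more careful---version of the one-line proof the paper actually gives.
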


Now, we describe the  $(\rho+4)$-approximation algorithm for the CSISTP.
First, for each $R_i$,  we use Algorithm \textit{$A_{mst}(G[R_i])$} to find a MST $T^s_i$ of  $G[R_i]$, $1\le i \le k$. Next, select any two 
vertices $u$ and $v$ in  $R_i \setminus R^{\prime}_i$ (Note that  $|R_i \setminus R^{\prime}_i| \geq 2$).  Then  
 we use \textit{$A_{h}(T^{s}_i,u,v)$} to find a Hamiltonian path $T^{h}_i$ between the two vertices $u$ and $v$  in the cube of $T^{s}_i$.  
By Lemma~\ref{hamratio}, we have $c(T^{h}_i) \le 2 c(T^{s}_i)$, $1\le i \le k$. 
Moreover. we   construct $G/R$ and let  $\widetilde{R} = \{ r_i | 1 \le i \le  k \} $, where $r_i$ is the vertex resulted from the contraction of $R_i$. 
For a graph $G=(V,E)$ with a subset $R \subseteq V$, let $\mathcal{A}_{stp}(G,R)$ be  the $\rho$-approximation algorithm 
to solve the Steiner tree problem for $R$ in $G$.  
Furthermore, 
we use $\mathcal{A}_{stp}(G/R,\widetilde{R})$ to find an inter-cluster tree that spans all vertices in $\widetilde{R}$.
Finally, replace each $r_i$ with  $T^{h}_i$ to obtain a clustered Steiner tree (also a clustered selected-internal Steiner tree).

For clarification, we describe the approximation algorithm for the CSISTP as follows.

\begin{description}
\item[\textbf{Algorithm APX}]
\item[\textbf{Input:}]  A complete graph $G=(V,E)$ with  a nonnegetive cost function $c$ on edges, two subsets $R \subset V$ and $R^{\prime} \subset R$, a partition $\mathcal{R}=\{R_1,R_2,\ldots,R_k\}$ of $R$, $R_i \cap R_j=\phi$, $i \neq j$,  
and $\mathcal{R}^{\prime}=\{R^{\prime}_1,R^{\prime}_2,\ldots,R^{\prime}_k\}$ of $R^{\prime}$, $R^{\prime}_i \subset R_i$, where the cost function is metric.
\item[\textbf{Output:}] A clustered selected-internal  Steiner tree  $\mathcal{T}$  for $R$ and  $R^{\prime}$ in $G$.
\item[1.] 
For each $R_i$,  use Algorithm \textit{$A_{mst}(G[R_i])$}~\cite{cormen,prime} to find a MST $T^s_i$ of  $G[R_i]$, $1\le i \le k$.  
\item[2.] For each  $T^s_i$, select any two vertices $u$ and $v$ in  $R_i \setminus R^{\prime}_i$, and then
use Algorithm \textit{$A_{h}(T^{s}_i,u,v)$} to find a Hamiltonian path $T^{h}_i$ between the two vertices $u$ and $v$  in the cube of $T^{s}_i$.
\item[3.]  Construct $G/R$ and let $\widetilde{R} = \{ r_i | 1 \le i \le  k \} $, where $r_i$ is the vertex resulted from the contraction of $R_i$.
\item[4.]  Use Algorithm  $\mathcal{A}_{stp}(G/R,\widetilde{R})$ to find  the inter-cluster tree that spans all vertices in $\widetilde{R}$.
\item[5.] Replace each $r_i$ with  $T^{h}_i$ to obtain a clustered selected-internal Steiner tree  $\mathcal{T}$. 
\end{description}

The result of this section is summarized in the following theorem.

\begin{theorem}
Algorithm APX is a $(\rho+4)$-approximation algorithm for the CSISTP. ~\label{Theorem1}
\end{theorem}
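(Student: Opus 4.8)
The plan is to bound $c(\mathcal{T})$ against the cost of an optimal clustered selected-internal Steiner tree $\mathcal{T}^{*}$ by splitting $c(\mathcal{T})$ into its local part $\sum_{i=1}^{k} c(T^{h}_i)$ (produced in Steps~1--2) and its inter-cluster part (produced in Step~4), and bounding each against $c(\mathcal{T}^{*})$ separately. Before the cost analysis I would confirm feasibility. Each $T^{h}_i$ is a Hamiltonian path of $R_i$ whose two endpoints were chosen in $R_i \setminus R^{\prime}_i$, so every vertex of $R^{\prime}_i$ is an internal vertex of $T^{h}_i$; this is precisely where the standing hypothesis $|R_i \setminus R^{\prime}_i| \ge 2$ is used. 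Replacing each $r_i$ by $T^{h}_i$ in Step~5 only attaches further edges at vertices of $R_i$, which can only raise their degrees, so the vertices of $R^{\prime}_i$ remain internal and $\mathcal{T}$ is a clustered selected-internal Steiner tree (its clustered cut structure being inherited from the contraction construction of~\cite{wu}).

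For the local part, fix a cluster $i$ and let $T^{*}_i$ be the local tree of $\mathcal{T}^{*}$, which may use Steiner vertices. Lemma~\ref{ltree} applied to $T^{*}_i$ yields a Hamiltonian path $\widehat{T}^{*}_i$ of $R_i$ with $c(\widehat{T}^{*}_i) \le 2\,c(T^{*}_i)$. Since $\widehat{T}^{*}_i$ is a spanning tree of $G[R_i]$ while $T^{s}_i$ is a minimum spanning tree of $G[R_i]$, we get $c(T^{s}_i) \le c(\widehat{T}^{*}_i) \le 2\,c(T^{*}_i)$. Combining this with $c(T^{h}_i) \le 2\,c(T^{s}_i)$ from Lemma~\ref{hamratio} gives $c(T^{h}_i) \le 4\,c(T^{*}_i)$. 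Because the local trees of $\mathcal{T}^{*}$ are edge-disjoint subtrees of $\mathcal{T}^{*}$, summing over $i$ yields $\sum_{i=1}^{k} c(T^{h}_i) \le 4\sum_{i=1}^{k} c(T^{*}_i) \le 4\,c(\mathcal{T}^{*})$.

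For the inter-cluster part, I would transform $\mathcal{T}^{*}$ into $\widehat{\mathcal{T}}^{*}$ as in Lemmas~\ref{ltree} and \ref{intercluster}; this transformation keeps each vertex of $R^{\prime}_i$ internal, and the inter-cluster tree $\widehat{\mathcal{T}}^{*}/R$ is a feasible Steiner tree spanning $\widetilde{R}$ in $G/R$. Hence the optimal Steiner-tree cost for $(G/R,\widetilde{R})$ is at most $c(\widehat{\mathcal{T}}^{*}/R)$, and by Lemma~\ref{intercluster} this is at most $c(\mathcal{T}^{*})$. As Step~4 uses the $\rho$-approximation $\mathcal{A}_{stp}$ and the contraction preserves inter-cluster edge costs when $r_i$ is expanded back to $T^{h}_i$, the inter-cluster part of $\mathcal{T}$ has cost at most $\rho\,c(\widehat{\mathcal{T}}^{*}/R) \le \rho\,c(\mathcal{T}^{*})$. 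Adding the two bounds gives $c(\mathcal{T}) \le 4\,c(\mathcal{T}^{*}) + \rho\,c(\mathcal{T}^{*}) = (\rho+4)\,c(\mathcal{T}^{*})$, as required.

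The step I expect to be the main obstacle is the inter-cluster estimate: one must check that $\widehat{\mathcal{T}}^{*}/R$ is a legitimate Steiner tree in the contracted graph $G/R$ so that the $\rho$-guarantee of $\mathcal{A}_{stp}$ may be applied to it, and that edge contraction does not undercount costs — this is exactly what Lemma~\ref{intercluster} delivers. A second, easily overlooked point is that the two error terms \emph{add} to $\rho+4$ rather than multiply, which is valid only because $\sum_{i} c(T^{*}_i)$ and $c(\widehat{\mathcal{T}}^{*}/R)$ are each bounded separately by the single quantity $c(\mathcal{T}^{*})$; moreover the factor $4$ in the local term is itself the product of the two doublings, from Lemma~\ref{ltree} (minimum spanning tree versus optimal local tree) and Lemma~\ref{hamratio} (cube Hamiltonian path versus minimum spanning tree).
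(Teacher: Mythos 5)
Your proposal is correct and follows essentially the same route as the paper: the same decomposition of $c(\mathcal{T})$ into local and inter-cluster parts, the same chain $c(T^h_i)\le 2\,c(T^s_i)\le 2\,c(\widehat{T}_i)\le 4\,c(T^{o}_i)$ via Lemmas~\ref{hamratio} and~\ref{ltree} together with the MST comparison in $G[R_i]$, and the same use of Lemmas~\ref{intercluster}--\ref{cstp} to bound the $\rho$-approximate inter-cluster tree by $\rho\,c(T_{opt})$. Your added feasibility check (endpoints chosen in $R_i\setminus R^{\prime}_i$, edge-disjointness of the optimal local trees) makes explicit two points the paper leaves implicit, but it is the same argument.
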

\begin{proof} 
We first analyze the time-complexity of Algorithm APX as follows.  For each $R_i$,  use Prim's Algorithm~\cite{cormen,prime} to find a  MST for $G[R_i]$ takes $O(|R_i|^2)$ time. Hence, step 1  runs $O(|R|^2)$ time. Also, step 2 can take $O(|R|^2)$ time~\cite{kara}. Step 3  and step 5 take $O(|V|^2)$ and $O(|V|)$ time, respectively. Hence, the time-complexity of Algorithm APX is  dominated by the cost of the step 3 for running the  $\rho$-approximation algorithm for the STP.  

Next, we prove the performance ratio of Algorithm APX. 
Let $T_{opt}$  be the optimal solution for the CSISTP  for $R$ and $R^{\prime}$ in $G$ and $T^{o}_i$ is its local tree of $R_i$, $1\le i \le k$.  We also let $\widehat{T}$ be a tree satisfying Lemma~\ref{ltree}--\ref{cstp}.
For each local tree $T^{h}_i$, we have $c(T^{h}_i) \le 2 c(T^{s}_i)$ by Lemma~\ref{hamratio}.
Since each $T^s_i$ is a MST of $G[R_i]$, by Lemma~\ref{ltree}, we have $c(T^{s}_i) \le c(\widehat{T}_i) \le 2 c(T^{o}_i)$. 
Then the cost of $\widehat{T}/R$ is greater than or equal to the cost of the optimal solution for the Steiner tree problem for $\widetilde{R}$ in the  graph $G/R$. 
 Step 5  runs a $\rho$-approximation algorithm to solve the Steiner tree problem for $\widetilde{R}$ in the  graph
 $G/R$. Hence,  $c(\mathcal{T}/R) \le \rho c(\widehat{T}/R) \le \rho  c(T_{opt})$ by Lemma~\ref{intercluster}--\ref{cstp}.
Finally, we have 
\begin{eqnarray*}
c(\mathcal{T})= \sum_{i=1}^k c(T^{h}_i)+c(\mathcal{T}/R) &\le& 2 \sum_{i=1}^k  c(T^{s}_i)+\rho  c(T_{opt})\\
                                                       &\le& 4 \sum_{i=1}^k  c(T^{o}_i)+\rho  c(T_{opt})
																									     \le (\rho+4) c(T_{opt}),																								
\end{eqnarray*}
 and the theorem is proved.
\end{proof}

\section{Conclusion}
In this paper, we have investigated the CSISTP.  
Then we have proposed an approximation algorithm  
with performance ratio of $(\rho+4)$ for the CSISTP on metric graphs. 
For future research, improving the performance ratio for the CSISTP  is an  immediate direction.

\appendix
\section{Appendix}
 Karaganis~\cite{kara} 
 proved that the cube of a connected graph $G$ with at least three vertices is 
\textit{Hamiltonian-connected}, i.e., there exists a Hamiltonian path between any two vertices. In this proof, Karaganis let $T$ be  a spanning tree of $G$ and 
construct a Hamiltonian path between any two vertices $v_a$ and $v_b$ in $T^3$ recursively, in which $T^3$ is the cube of $T$. 
We list the algorithm, denoted by $A_{h}(T,v_a,v_b)$, as follows.

\begin{description}
\item[\textbf{Algorithm $A_{h}(T,v_a,v_b)$}]
\item[\textbf{Input:}]  A tree $T=(V_T,E_T)$ with  two vertices $v_a$ and $v_b$ of $T$.
\item[\textbf{Output:}] A Hamiltonian path $T^h$ between $v_a$ and $v_b$ in $T^3$.
\item[]
 Repeat the following steps until the condition $|V_T|=1$ holds.
\begin{description}
\item [1.] {\bf If} $v_a$ and $v_b$ are not adjacent {\bf then}
\begin{description}
\item[1.1.] 
Find a path $P= (v_a, v_1, v_2, \ldots, v_b)$ between  
$v_a$ and $v_b$  in $T$.
\item[1.2.] 
Cut the edge $(v_a,v_1)$ in $T$ such that $T$ be separated  into two trees,   
say $T_{v_a}$ and $T_{v_b}$ respectively, containing $v_a$ and $v_b$.
\item[1.3.] {\bf If} the number of the vertex of $T_{v_a}$ is one {\bf then} let 
$v_{a^{\prime}}$ be $v_a$ {\bf else} let $v_{a^{\prime}}$ be a vertex adjacent to $v_a$ in $T_{v_a}$. 
\item[1.4.] Let $v_{b^{\prime}}$ be the vertex  $v_1$ in $T_{v_b}$.
 \end{description}
\item [\ \ \ ] {\bf Else} 
\begin{description}
\item[1.5.] 
Cut the edge $(v_a,v_b)$ in $T$ such that $T$ be separated  into two trees,    
say $T_{v_a}$ and $T_{v_b}$ respectively, containing $v_a$ and $v_b$.
 \item[1.6.] {\bf If} the number of the vertex of $T_{v_a}$ is one {\bf then} 
let $v_{a^{\prime}}$  be $v_a$ {\bf else} let $v_{a^{\prime}}$ be a vertex adjacent to $v_a$ in $T_{v_a}$.
 \item[1.7.] {\bf If} the number of the vertex of $T_{v_b}$ is one {\bf then} 
let $v_{b^{\prime}}$  be $v_b$ {\bf else} let $v_{b^{\prime}}$ be a vertex adjacent to $v_b$ in $T_{v_b}$.
 \end{description}
\item[2.] 
Call $A_{h}(T_{v_a},v_a,v_{a^{\prime}})$ to find a Hamiltonian path $P_{v_a}$ 
between $v_a$ and $v_{a^{\prime}}$ in $T_{v_a}^3$.
\item[3.] Call $A_{h}(T_{v_b},v_{b^{\prime}},v_b)$ to find a Hamiltonian path 
$P_{v_b}$ between $v_{b^{\prime}}$ and $v_b$ in $T_{v_b}^3$.
\item[4.]  
Connect $P_{v_a}$ and $P_{v_b}$ by the edge $(v_{a^{\prime}},v_{b^{\prime}})$.
\end{description}
\end{description}

 After running $A_{h}(T,v_a,v_b)$, Karaganis~\cite{kara} 
proved that 
there exists a Hamiltonian path between the two vertices $v_a$ and $v_b$ in $T^3$. 
By the triangle inequality and traversal shortcuts between the adjacent vertices in $T^h$, it is clear that $c(T^h) \le 2 c(T)$. 
Let \textit{F($|V_T|$)}  be the time complexity of $A_{h}(T,v_a,v_b)$ 
 on a tree $T$ with $|V_T|$ vertices. Then we can express $F(|V_T|)$  recursively as a recurrence relation :  
$F(|V_T|)=O(|V_T|)+F(|V_{T_{v_a}}|)+F(|V_{T_{v_b}}|)$ and  the solution of 
$F(|V_T|)$ is $O(|V_T|^2)$ time. 

\end{document}